\def\ga{\mathfrak{a}}
\def\gg{\mathfrak{g}}
\def\gk{\mathfrak{k}}
\def\gn{\mathfrak{n}}
\def\gs{\mathfrak{s}}
\def\gu{\mathfrak{u}}
\def\H{\mathbb{H}}
\def\Span{{\rm Span}\,}
\def\Ad{{\rm Ad}}
\def\Span{{\rm Span}\,}
\newtheorem{theorem}[equation]{Theorem}
\newtheorem{lemma}[equation]{Lemma}
\newtheorem{corollary}[equation]{Corollary}
\newtheorem{definition}[equation]{Definition}
\newtheorem{remark}[equation]{Remark}
\def\sideremark#1{\ifvmode\leavevmode\fi\vadjust{\vbox to0pt{\vss% the remark
 \hbox to 0pt{\hskip\hsize\hskip1em%                          will appear only
\vbox{\hsize2cm\tiny\raggedright\pretolerance10000 %          on the side
 \noindent #1\hfill}\hss}\vbox to8pt{\vfil}\vss}}} %          in 2cm
\title{Bounded Isometries and Homogeneous Quotients}
\author{Joseph A. Wolf\footnote{Research partially supported by the Simons
Foundation and by the Dickson Emeriti Professorship.\newline
}}
\date{November 25, 2015}
\begin{document}

\maketitle

\abstract{In this paper we give an explicit description of the bounded
displacement isometries of a class of spaces that includes the Riemannian 
nilmanifolds.  The class of spaces consists of metric spaces (and thus
includes Finsler manifolds) on which an exponential solvable Lie group
acts transitively by isometries.  The bounded isometries are proved to
be of constant displacement.  Their characterization gives further
evidence for the author's 1962 conjecture on homogeneous Riemannian
quotient manifolds.  That conjecture suggests that if $\Gamma \backslash M$
is a Riemannian quotient of a connected simply connected homogeneous
Riemannian manifold $M$, then $\Gamma \backslash M$ is homogeneous if and
only if each isometry $\gamma \in \Gamma$ is of constant displacement.
The description of bounded isometries in this paper gives an alternative 
proof of an old result of J. Tits on bounded automorphisms of semisimple 
Lie groups.
The topic of constant displacement isometries has an interesting history, 
starting with Clifford's use of quaternions in
non--euclidean geometry, and we sketch that in a historical note.}
\smallskip

\section{Introduction}\label{sec1}
\setcounter{equation}{0}
An isometry $\rho$ of a metric space $(M,d)$ is of {\em constant
displacement} if it moves each
point the same distance, i.e. if the displacement function
$\delta_\rho(x) := d(x,\rho(x))$ is constant.  
W. K. Clifford \cite{C1873} described such isometries for the $3$--sphere, 
using the then--recent discovery of quaternions.  Somewhat later
G. Vincent \cite{V1947} used the term ``Clifford translation''
for constant displacement isometries of round spheres in his study of 
spherical space forms $\Gamma \backslash S^n$ with $\Gamma$ metabelian.
Later the author (\cite{W1960}, \cite{W1961a}, \cite{W1961b}) used the
the term ``Clifford translation'' in the context of metric spaces,
especially Riemannian manifolds, proving

\begin{quote} 
\noindent {\bf Conjecture.}
Let $M$ be a connected, simply connected Riemannian homogeneous manifold
and let $M \to \Gamma \backslash M$ be a Riemannian covering.  Then 
$\Gamma \backslash M$ is homogeneous if and only if every $\gamma \in \Gamma$
is an isometry of constant displacement on $M$.
\end{quote}

\noindent
for the case where $M$ is a Riemannian symmetric space \cite{W1962b}.  In 
part the
argument was case by case, but later V. Ozols (\cite{O1969}, \cite{O1973},
\cite{O1974b}) gave a general argument for the
situation where $\Gamma$ is a cyclic subgroup of the identity component
$I^0(M)$ of the isometry group  $I(M)$.
H. Freudenthal \cite{F1963} discussed the situation where
$\Gamma \subset I^0(M)$, and introduced the term
{\em Clifford--Wolf isometry} (CW isometry) for isometries of constant 
displacement.  That seems to be the term in general usage.
\smallskip

Since then there has been a great deal of work on CW isometries and
their infinitesimal analogs, Killing vector fields of constant length,
in both the Riemannian and the Finsler manifold settings.
See \cite{BN2008a}, \cite{BN2008b}, \cite{BN2009},  
\cite{DMW1986}, \cite{DX2012}, \cite{DX2013a}, \cite{DX2013b}, \cite{DX2013c}, 
\cite{DX2014a}, \cite{DX2014b}, \cite{D1983}, and \cite{DW2012}.
The Conjecture was proved for Finsler symmetric spaces by S. Deng
and the author in \cite{DW2012}.
\smallskip

Most of the definitive results on CW isometries are concerned with Riemannian
(and later Finsler) symmetric spaces.  There we have a full understanding
of CW isometries (\cite{W1962b} and \cite{DW2012}).   The Conjecture is
known for CW isometries on some non--symmetric homogeneous Riemannian 
manifolds.   
% For example it was recently extended to certain classes of
% Riemannian normal homogeneous spaces in \cite{XW2014} and \cite{WPM2015}.
The homogeneous Riemannian manifolds $(M,ds^2)$ for which the Conjecture is
known are: (i) Riemannian symmetric spaces \cite{W1962b}, (ii) Riemannian 
manifolds of non--positive sectional curvature \cite{W1964} and manifolds
without focal points \cite{D1983}, (iii) Riemannian
manifolds that admit a transitive semisimple group of isometries that has no
compact factor \cite{DMW1986}, (iv) Stieffel manifolds and some structurally
related compact homogeneous Riemannian manifolds (\cite{C1983}, \cite{C1986}),
(v) certain classes of
Riemannian normal homogeneous spaces (\cite{XW2014}, \cite{WPM2015}), 
and (vi) Riemannian nilmanifolds and Riemannian solvmanifolds (in this paper).
\smallskip

Here we give a complete structure theory for bounded isometries 
(isometries of bounded displacement) of metric spaces on which an 
exponential solvable Lie group acts transitively by isometries.  
We show that all bounded isometries are CW and belong to a certain
connected abelian group of CW isometries that is normal in the full
isometry group.  In the nilmanifold case that normal subgroup is the
center of the nilradical of the isometry group, but in other cases 
it may be smaller.  Since it is reduced to the identity
in the group $AN$ of an Iwasawa decomposition $G = KAN$, $G$ semisimple,    
this gives an alternative proof of J. Tits' theorem \cite{T1964}
that a semisimple Lie group with no compact factor has no nontrivial
bounded automorphism.
\smallskip

The class of spaces to which this applies includes Riemannian 
(and even Finsler) exponential solvmanifolds, in particular Riemannian 
nilmanifolds.  These results prove the Conjecture on homogeneous quotients
for those exponential solvmanifolds, and consequently Riemannian nilmanifolds.
\smallskip

In Section \ref{sec2} we work out a complete structure theory for
individual bounded isometries 
of metric spaces $(M,d)$ on which an exponential solvable Lie group 
$S$ acts transitively by isometries.  We first prove that the isometry
group $I(M,d)$ is a Lie group and that $I(M,d) = SK$ where $K$ is
an isotropy subgroup.  This is analogous to the Iwasawa decomposition
of a real reductive Lie group.  Then we show that every bounded isometry of
$(M,d)$ belongs to the center of $S$.  Thus every bounded
isometry is CW  and that center is
a normal subgroup of $I(M,d)$.  These results are in Theorem 
\ref{central} and its corollaries.
\smallskip

In Section \ref{sec3} we study quotients $\Gamma \backslash (M,d)$.
For locally isometric coverings $\psi: (M,d) \to \Gamma \backslash (M,d)$
we show that $\Gamma \backslash (M,d)$ is homogeneous if and only if
$\Gamma$ is a discrete group of CW isometries of $(M,d)$.  That
result is part of Theorem \ref{coverings}, which lists several other
equivalent conditions.  It proves the Conjecture for our class of metric 
spaces $(M,d)$, in particular for Riemannian (and Finsler) exponential
solvmanifolds.  One corollary is the infinitesimal version, for Riemannian 
(and Finsler) exponential solvmanifolds, characterizing the Killing vector 
fields of constant length. 
\smallskip

The arguments for Riemannian nilmanifolds are slightly less complicated
because a some technical considerations become transparent.
The nilmanifold version of our main result is
Corollary \ref{nil-coverings}.
\smallskip

The author thanks the referee for suggesting a better organization of this
paper and for suggesting that he include more explanatory background material.
\bigskip

\centerline{\Large \bf Historical Note}
\bigskip

The theory of constant displacement isometries can be traced back to
the independent discovery of quaternions by O. Rodrigues in 1840
\cite{R1840} and W. R. Hamilton in 1844 (\cite{H1844a}, \cite{H1844b}).
See \cite{A1989} for a description.  They used quaternions to describe
rotations of spheres, but W. K. Clifford \cite{C1873} seems to have introduced
their use in differential geometry in his construction\footnote{In modern 
terms, the group $\H'$ of unit 
quaternions, viewed as $S^3$, acts on itself by left and right
translations, $(a,b): q \mapsto aqb^{-1}$, and one can view the Clifford
torus as the orbit of $q$ as $a$ and $b$ each runs over a one--parameter 
subgroups of $\H'$.  Any two such one--parameter groups of transformations
of $S^3$ commute pointwise, so each such one--parameter group acts by
isometries of constant displacement on $S^3$.}  
of a flat torus in the sphere $S^3$.  In 1890 a paper of F. Klein \cite{K1890} 
introduced group theory {\em per se} into the picture. The next year
W. Killing introduced the term ``Clifford--Klein
space form'' for Riemannian manifolds of constant curvature 
and formulated the ``Clifford--Klein space form problem''
(\cite{K1891}, \cite{K1893}) in terms of quotients $\Gamma \backslash M$
where $M$ is a complete simply connected manifold of constant curvature.  
The classification of spherical space forms $\Gamma \backslash S^n$ 
was obscured in 1907 by the assertion\footnote{F. Enriques \cite[p. 117]{E1907}:
Endlich {\em l\" a\ss t sich 
eine dreidimensionale elliptische 
Raumform als Ganzes entweder auf den elliptischen oder auf den sph\" arischen
Raum in der Weise abwickeln, da{\ss}  jedem ihrer Punkte in diesem Raume eine
gewisse ganze Anzahl $p$ von (homologen) Punkten entspricht, wo zwei 
homologe Punkte durch eine sogennante Schiebung von der L\" ange 
$\frac{\ell\pi}{p}$ oder $\frac{2\ell\pi}{p}$ durch Deckung gebracht werden 
k\" onnen.} Dieses letzte Resultat erstreckt sich auf alle
elliptischen Raumformen von ungerader Dimensionenzahl $n$.} 
in the influential Enzyklop\" adie der Mathematischen
Wissenschaft that (in modern terms) if $\Gamma \backslash S^n$ is a
spherical space form, $n$ odd, then $\Gamma$ is a finite group of
constant displacement isometries of $S^n$.  This was corrected by 
H. Hopf in 1926 \cite{H1926} and by W. Threlfall and H. Seifert in 1930
(\cite{TS1930}, \cite{TS1932}) with the classification of all spherical 
space forms $\Gamma \backslash S^3$; see \cite{H1961}. This
was extended in 1947 by G. Vincent
\cite{V1947} for $\Gamma \backslash S^n$ with $\Gamma$ metabelian.
Vincent introduced the term ``Clifford translation'' and asked about their
relation to binary dihedral and polyhedral groups \cite[\S 10.4]{V1947}.
\smallskip

In 1960 the author formulated and proved the Conjecture for spaces of constant 
curvature \cite{W1960}, 
and in 1961 he used that result to answer Vincent's questions \cite{W1961a}.
In 1961 the author proved the Conjecture for Riemannian symmetric spaces
\cite{W1962b}.
Since then, as mentioned earlier in this Introduction, there has been a 
lot of progress toward the proof of the Conjecture, and this note is a
small step in that direction.

\section{Bounded Isometries inside Exponential Solvable Groups}\label{sec2}
\setcounter{equation}{0}

We will follow the convention that Lie groups are denoted by capital Latin 
letters and their Lie algebras are denoted by the corresponding lower
case German letters.  Thus, in the definition
\begin{definition}
A solvable Lie group $S$ is {\em exponential solvable} if the exponential
map $\exp: \gs \to S$ is a diffeomorphism.  Examples include the  simply
connected nilpotent Lie groups and the groups $AN$ in the Iwasawa
decomposition $G = KAN$ of a real semisimple Lie group.
\end{definition}
\noindent
it will be understood that $\gs, \gg, \gk, \ga \text{ and } \gn$ are the 
respective Lie algebras of $S, G, K, A \text{ and } N$.
\smallskip

We are looking at bounded isometries of metric spaces $(M,d)$ on which
an exponential solvable Lie group $S$ acts effectively and transitively 
(and thus, it will turn out, simply transitively) by isometries.  The
most interesting case is that of Riemannian exponential solvmanifolds.
By {\em Riemannian exponential solvmanifold} (relative to $S$) 
we mean a Riemannian manifold $M$ on which an exponential solvable Lie group 
$S$ of isometries acts transitively, and the kernel of the action of $S$
is discrete.  Then it is easy to see that the action of $S$ on $M$ lifts 
to a simply transitive action of the universal covering group of $S$ on 
the universal Riemannian covering space of $M$.  Examples
include connected simply connected Riemannian nilmanifolds and (see
\cite{W1964}, \cite{AW1976} and \cite{H1974}) connected simply connected 
Riemannian manifolds of non--positive sectional curvature.  However, 
except for the proof that the isometry group $I(M,d)$ is a Lie group,
the arguments are the same for metric spaces as for Riemannian manifolds,
so we work in that more general class.

\begin{lemma}\label{simply}
Let $(M,d)$ be a metric space on which an exponential solvable Lie group
$S$ acts effectively and transitively by isometries.  Then the
action of $S$ on $M$ is simply transitive.
\end{lemma}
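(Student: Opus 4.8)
The plan is to fix a base point $o \in M$ and reduce the statement to showing that the isotropy subgroup $H = \{s \in S : s\cdot o = o\}$ is trivial; since $S$ already acts transitively, triviality of $H$ is precisely simple transitivity. The argument will balance two facts pulling in opposite directions: algebraically, an exponential solvable group has no nontrivial compact subgroup, while geometrically, the isotropy of an isometric action on a locally compact space is forced to be compact. First I would record the algebraic fact. Because $\exp: \gs \to S$ is a diffeomorphism, $S$ is homeomorphic to $\gs \cong \R^{\dim S}$; in particular $S$ is torsion-free (if $\exp(X)^{k} = \exp(kX) = e$ then $kX = 0$ by injectivity, so $X = 0$) and every nontrivial one-parameter subgroup $t \mapsto \exp(tX)$ is a closed line homeomorphic to $\R$, hence noncompact. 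It follows that $S$ contains no nontrivial compact subgroup: such a subgroup can contain no noncompact line, so its identity component is trivial, hence the subgroup is finite, hence trivial by torsion-freeness.

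Next I would supply the topology needed on the geometric side. The continuous transitive action yields a continuous bijection $S/H \to M$, where $H$ is closed (as the preimage of $\{o\}$ under the continuous orbit map $s \mapsto s\cdot o$). Since $S = \exp \gs$ is connected, $M$ is connected. The first technical point is that $S/H \to M$ is a homeomorphism, so that $M$ inherits local compactness and connectedness from the manifold $S/H$; this rests on openness of the orbit map, which holds because $S$ is $\sigma$-compact. Granting local compactness, for small $\varepsilon$ the closed ball $B = \{x : d(o,x) \le \varepsilon\}$ is compact, and it is $H$-invariant because every $h \in H$ satisfies $d(o, h\cdot x) = d(h\cdot o, h\cdot x) = d(o,x)$; thus $H$ acts by isometries on the compact metric space $B$.

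The hard part will be upgrading this to compactness of $H$ itself. The isometries of a compact metric space form a compact group in the topology of uniform (equivalently pointwise) convergence, by the Arzel\`a--Ascoli theorem applied to the uniformly bounded, $1$-Lipschitz family $H|_{B}$; equivalently one may invoke the theorem of van Dantzig and van der Waerden that isotropy groups of the isometry group of a connected locally compact metric space are compact. The delicate issue is that this produces compactness in the pointwise-convergence topology on transformations of $M$, and one must check that on $H$ this topology agrees with its topology as a subgroup of the Lie group $S$. Here effectiveness of the action is what makes $S \to I(M,d)$ a continuous injection, and the identification $M \cong S/H$ is what makes it a homeomorphism onto its image, so that the compact closure of $H|_{B}$ pulls back to compactness of $H$ inside $S$. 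Once $H$ is known to be a compact subgroup of the exponential solvable group $S$, the algebraic fact forces $H = \{e\}$, and the action is simply transitive.
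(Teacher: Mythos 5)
Your overall strategy is the same as the paper's: reduce to triviality of the isotropy group $H$, show $H$ is compact by combining invariance of a small compact ball with the van Dantzig--van der Waerden / Arzel\`a--Ascoli compactness mechanism, and conclude because an exponential solvable group has no nontrivial compact subgroup. The algebraic half of your argument (torsion-freeness of $S$, the fact that every nontrivial one-parameter subgroup is a closed copy of $\R$, hence triviality of any compact subgroup) is correct, and is usefully more explicit than the paper's one-line appeal to the definition; likewise your derivation of local compactness of $M$ from $M\cong S/H$ supplies a step the paper asserts without comment.

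The gap is in the step you yourself flag as delicate: transferring compactness from the closure of $H|_B$ in $\mathrm{Isom}(B)$ back to $H$ with its topology as a closed subgroup of the Lie group $S$. Your justification is that the homeomorphism $S/H\to M$ makes $S\to I(M,d)$ a homeomorphism onto its image. That implication does not hold: the identification $S/H\cong M$ carries no information about the topology on $H$, because all of $H$ lies in the single fiber over the base point $eH$. Concretely, a sequence $h_n\in H$ leaving every compact subset of $S$ is invisible in $S/H$, and the hypothesis that $h_n\cdot x$ converges for every $x\in M$ translates only into convergence of the cosets $h_n sH\to sH$, which never forces $h_n$ to converge in $S$ (compare the irrational winding $\R\to\T^2$: continuous and injective, yet not a homeomorphism onto its image). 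What the argument actually requires is that $h\mapsto h|_B$ be a \emph{proper} map from $H$ to $\mathrm{Isom}(B)$, i.e.\ that preimages of compact sets be compact; continuity, injectivity, and the coset identification do not yield this. Closing it is exactly the content of the van Dantzig--van der Waerden citation in the paper's proof (see also Manoussos's exposition): for a connected, locally compact metric space the full isometry group is locally compact in the compact--open topology and acts properly, so its isotropy group at $o$ is compact, and one then identifies the closed subgroup $H\subset S$ with a closed subgroup of that compact isotropy group (for instance via the open mapping theorem for continuous injective homomorphisms of $\sigma$-compact locally compact groups). With that substitution in place of your homeomorphism claim, the rest of your argument goes through.
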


\begin{proof} Let $x_0 \in M$.  The isotropy subgroup 
$S_{x_0} = \{s \in S \mid s(x_0) = x_0\}$ preserves all metric balls
$B_r(x_0) = \{x \in M \mid d(x,x_0) \leqq r\}$. As $S_{x_0}$ is a closed
subgroup, $S$ is a Lie group, and the $B_r(x_0)$ are compact, it follows 
from \cite{vDvW1928} that $S_{x_0}$ is compact.  
By definition of exponential solvable group, the only
compact subgroup of $S$ is $\{1\}$.
\end{proof}

For the rest of the section we fix a metric space  $(M,d)$ and an
exponential solvable Lie group $S$ acting effectively and
transitively by isometries.  We may view $(M,d)$ as the group manifold
$S$ with a left--invariant metric space structure.  The most interesting
cases are when $d$ is the distance function of a 
Riemannian metric $ds^2$ or a Finsler metric $F$.
In any case we will write $I(M,d)$ for the group of all isometries 
of $(M,d)$.
\smallskip

We will need an obvious elementary estimate; it is included for completeness.

\begin{lemma}\label{unbounded-unip-orb}
Let $U$ be a unipotent group of linear transformations of a real vector
space $V$.  Suppose that $v\in V$ is not a fixed point of $U$.  Then
$U(v)$ is unbounded, in other words is not contained in a compact
subset of $V$.
\end{lemma}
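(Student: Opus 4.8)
The plan is to reduce to a single element of $U$ and then exploit the polynomial growth forced by unipotence. Since $v$ is not fixed by $U$, I would first choose $u \in U$ with $u(v) \neq v$. Because the cyclic group $\langle u\rangle$ is contained in $U$, it suffices to prove that the cyclic orbit $\{u^n(v) \mid n \in \N\}$ is already unbounded; any unbounded subset of $U(v)$ forces $U(v)$ itself to be unbounded.

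Next, using that $u$ is unipotent, I would set $N := u - I$, a nilpotent operator on $V$, say with $N^{m+1} = 0$. The binomial theorem then gives the finite expansion $u^n = \sum_{k=0}^{m}\binom{n}{k}N^k$, and applying this to $v$ yields
\[
u^n(v) = \sum_{k=0}^{m} \binom{n}{k}\, N^k v .
\]
Since $Nv = u(v) - v \neq 0$, there is a largest index $j$ with $1 \le j \le m$ and $N^j v \neq 0$. Viewing each $\binom{n}{k}$ as a polynomial in $n$ of degree $k$ with leading coefficient $1/k!$, the right-hand side is a $V$-valued polynomial in $n$ of degree exactly $j$, whose coefficient of $n^j$ is the nonzero vector $\tfrac{1}{j!}N^j v$.

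Finally I would divide by $n^j$: as $n \to \infty$ the lower-order terms are negligible, so $n^{-j} u^n(v) \to \tfrac{1}{j!} N^j v \neq 0$, whence $\|u^n(v)\| \sim \tfrac{1}{j!}\,\|N^j v\|\, n^j \to \infty$. Thus the orbit contains an unbounded sequence and cannot lie in any compact subset of $V$. I do not expect a genuine obstacle here; the only point requiring a little care is the identification of the top degree $j$ and the verification that its coefficient is nonzero, so that dividing by $n^j$ isolates a nonzero limit rather than collapsing the estimate.
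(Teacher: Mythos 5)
Your proof is correct and follows essentially the same route as the paper's: both expand the unipotent action as a $V$-valued polynomial in a parameter and observe that the top-degree nonvanishing term dominates and forces the orbit to be unbounded. The only difference is that you iterate a single group element (binomial expansion of $u^n=(I+N)^n$) while the paper flows along a one-parameter subgroup $\exp(t\xi)$ with $\xi$ in the Lie algebra; your discrete version has the minor advantage of not implicitly requiring $U$ to be connected.
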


\begin{proof} Let $\xi \in \gu$ with $\xi(v) \ne 0$.  Then
$\exp(t\xi)v = \sum_0^r \frac{1}{n!}t^n\xi^n(v)$ where
$\xi^rv \ne 0 = \xi^{r+1}v$.  As $t \to \infty$ the
$\frac{1}{r!}t^r \xi^rv$ summand dominates the others and is unbounded.
\end{proof}

That is sufficient for our needs if $S$ is nilpotent.  But in general
we need a sightly less obvious version.

\begin{lemma}\label{unbounded-esolv-orb}
Let $S$ be an exponential solvable Lie group and $\xi \in \gs$ a
non--central element of the Lie algebra.  Then $\Ad(S)\xi$ is unbounded.
\end{lemma}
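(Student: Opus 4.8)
The plan is to exhibit unboundedness already along a single one--parameter subgroup. Since $\xi$ is non--central, I would choose $\eta \in \gs$ with $[\eta,\xi] \neq 0$, set $A = \ad\eta$, and observe that
\[
\Ad(\exp t\eta)\xi = e^{tA}\xi \in \Ad(S)\xi, \qquad A\xi = [\eta,\xi] \neq 0 .
\]
Thus it suffices to show that the real--analytic trajectory $\{e^{tA}\xi : t \in \R\}$ is unbounded whenever $A\xi \neq 0$. When $S$ is nilpotent the operator $A$ is nilpotent and this is immediate from Lemma \ref{unbounded-unip-orb}; the new content is to control the non--nilpotent part of $A$ as well.

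The key structural input is that exponential solvability forbids oscillatory behavior. Because $\exp : \gs \to S$ is a diffeomorphism, its differential $d\exp_X = (1 - e^{-\ad X})/\ad X$ is everywhere invertible, so no $\ad X$ can have a nonzero eigenvalue lying in $2\pi i \Z$; rescaling $X$ then shows that for every $X \in \gs$ the operator $\ad X$ has no nonzero purely imaginary eigenvalue. In particular every eigenvalue of $A$ is either $0$ or has nonzero real part.

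With this in hand I would complexify and split $\gs_\C = \bigoplus_\lambda W_\lambda$ into the generalized eigenspaces of $A$. Each $W_\lambda$ is $A$--invariant, so the projection onto $W_\lambda$ commutes with $e^{tA}$ and the $W_\lambda$--component of $e^{tA}\xi$ is $e^{tA}\xi_\lambda$, where $\xi = \sum_\lambda \xi_\lambda$. Since $A\xi \neq 0$ there is a $\lambda$ with $A\xi_\lambda \neq 0$. If $\lambda \neq 0$, then $\Re\lambda \neq 0$, and writing $A = \lambda I + N_\lambda$ with $N_\lambda$ nilpotent on $W_\lambda$ gives $\|e^{tA}\xi_\lambda\| = e^{(\Re\lambda) t}\,\|e^{tN_\lambda}\xi_\lambda\|$; the second factor is a nonzero vector--valued polynomial in $t$, hence bounded below by a positive constant for $|t|$ large, so the product tends to infinity as $t \to +\infty$ or $t \to -\infty$ according to the sign of $\Re\lambda$. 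If instead $\lambda = 0$, then $A$ is nilpotent on $W_0$ and $\xi_0$ is not a fixed point, so Lemma \ref{unbounded-unip-orb}, applied to the unipotent one--parameter group $e^{tA}$ acting on $W_0$, makes $e^{tA}\xi_0$ unbounded. In either case the bounded projection $\gs_\C \to W_\lambda$ gives $\|e^{tA}\xi\| \geq c\,\|e^{tA}\xi_\lambda\|$, and since unboundedness of the real trajectory is equivalent to unboundedness of its image in $\gs_\C$, we conclude that $\Ad(S)\xi$ is unbounded.

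The main obstacle is precisely the step ruling out nonzero purely imaginary eigenvalues: without it $e^{tA}$ could act by rotations and keep the whole orbit bounded, so exponential solvability must be used essentially here and not merely to invoke Lemma \ref{simply}. A secondary point I would check carefully is that no cancellation can occur across distinct generalized eigenspaces, which is exactly why I isolate a single $W_\lambda$ by projection rather than estimating $\|e^{tA}\xi\|$ head--on.
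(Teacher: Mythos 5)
Your argument is correct, and it takes a genuinely different route from the paper's. The paper splits according to whether the nilradical $U$ of $S$ centralizes $\xi$: if not, Lemma \ref{unbounded-unip-orb} applies at once to the unipotent group $\Ad(U)$ acting on $\gs$; if so, $\Ad(U)$ acts trivially on $W=\Span \Ad(S)\xi$, the induced action of $\gs$ on $W$ is commutative (since $[\gs,\gs]\subseteq\gu$), and the paper finishes by asserting that nonzero elements of that commutative algebra have eigenvalues with nonzero real part, forcing an unbounded orbit. You instead fix a single $\eta$ with $[\eta,\xi]\neq 0$ and analyze the one--parameter orbit $e^{t\,\ad\eta}\xi$ via the generalized eigenspace decomposition of $A=\ad\eta$, treating the nilpotent block by Lemma \ref{unbounded-unip-orb} (polynomial growth) and the blocks with $\Re\lambda\neq 0$ by exponential growth as $t\to\pm\infty$, with the projection onto one $W_\lambda$ correctly ruling out cancellation between blocks. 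Both proofs ultimately rest on the same spectral fact --- that no $\ad X$ in an exponential solvable algebra has a nonzero purely imaginary eigenvalue --- but you derive it explicitly from the invertibility of $(1-e^{-\ad X})/\ad X$ together with rescaling, whereas the paper invokes it tacitly. What your version buys is a sharper and more self-contained statement: unboundedness is exhibited already along a single one--parameter subgroup, and the role of exponential solvability is isolated and justified. What the paper's version buys is brevity, by disposing of the entire ``nilpotent part'' of the problem in one stroke via the nilradical rather than block by block.
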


\begin{proof} Let $U$ be the nilradical of $S$.  If $U$ does not
centralize $\xi$ then $\Ad(S)\xi$ is unbounded by Lemma
\ref{unbounded-unip-orb}.  Thus we may assume $\Ad(U)\xi = \{\xi\}$.
If $s \in S$ now $\Ad(U)\Ad(s)\xi = \Ad(s) \Ad(U)\xi = \Ad(s)\xi$,
so $\Ad(U)$ acts trivially on $W:=\Span \Ad(S)\xi$.
\smallskip

The restriction $\Ad(\gs)|_W$ is a commutative linear Lie algebra in which
every nonzero element has an eigenvalue with nonzero real part.  Thus
$\Ad(S)\eta$ is unbounded for some $\eta \in W$, and consequently for some
$\eta \in \Ad(S)\xi$.  Now $\Ad(S)\xi$ is unbounded.
\end{proof}

\begin{theorem}\label{central}
Let $(M,d)$ be a metric space on which an exponential solvable Lie group
$S$ acts effectively and transitively by isometries.  Let $G = I(M,d)$.
Then $G$ is a Lie group, any isotropy subgroup $K$ is compact, and $G = SK$.  
If $g \in G$ is a bounded isometry then $g$ is a central element in $S$.
\end{theorem}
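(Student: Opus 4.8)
First I would nail down the structure of $G=I(M,d)$. By Lemma~\ref{simply} the orbit map $s\mapsto s(x_0)$ identifies $S$ diffeomorphically with $M$, so $M$ is a finite-dimensional manifold whose closed balls $B_r(x_0)$ are compact (as noted in the proof of Lemma~\ref{simply}). Giving $I(M,d)$ the compact--open topology, the isometries are $1$--Lipschitz, hence equicontinuous; those fixing $x_0$ carry each $B_r(x_0)$ into itself, so by Arzel\`a--Ascoli the isotropy group $K=G_{x_0}$ is compact, and the same equicontinuity (in the spirit of van Dantzig--van der Waerden) shows $G$ is locally compact and acts properly on $M$. Since $G$ then acts effectively and properly on the manifold $M$, the Montgomery--Zippin solution of Hilbert's fifth problem makes $G$ a Lie group. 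Transitivity of $S$ gives $G=SK$: for $g\in G$ choose $s\in S$ with $s(x_0)=g(x_0)$, so that $s^{-1}g\in K$; and $S\cap K$, being a compact subgroup of $S$, is trivial, so every $g$ has a unique smooth decomposition $g=\sigma\kappa$ with $\sigma\in S$, $\kappa\in K$.

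Next, let $g$ be a bounded isometry, say $\delta_g\le C$. Because $d$ is $S$--left--invariant,
\[
\delta_g(s(x_0))=d\bigl(s(x_0),gs(x_0)\bigr)=d\bigl(x_0,(s^{-1}gs)(x_0)\bigr),
\]
so $(s^{-1}gs)(x_0)\in B_C(x_0)$ for every $s\in S$. The evaluation $h\mapsto h(x_0)$ is the quotient map $G\to G/K=M$, which is proper since $K$ is compact; hence the $S$--conjugacy class $\{s^{-1}gs:s\in S\}$ has compact closure $\mathcal{C}\subset G$. This is the one fact genuinely stronger than ``$\delta_g$ bounded'', and the rest of the argument extracts centrality from it.

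The main step, and the step I expect to be the chief obstacle, is to show $\kappa=1$, i.e. $g\in S$. Writing $g=\sigma\kappa$ and letting $\kappa$ act on $M\cong S$ through $\kappa(s(x_0))=\phi_\kappa(s)(x_0)$ (with $\phi_\kappa(1)=1$), one computes the $S$--part of a conjugate,
\[
\sigma(s^{-1}gs)=s^{-1}\,\sigma\,\phi_\kappa(s),
\]
and compactness of $\mathcal{C}$ forces this to remain in a compact subset of $S$ as $s$ varies. In the flat model $M=\R^n$, $g:x\mapsto Ax+b$, the expression is the translation by $b+(A-I)v$ (with $s$ the translation by $v$), which stays bounded only if the rotational part $A=I$; I would argue that the same rigidity holds in general, the point being that a nontrivial $\kappa$ makes $s^{-1}\phi_\kappa(s)$ escape to infinity along the expanding one--parameter subgroups furnished by Lemmas~\ref{unbounded-unip-orb} and~\ref{unbounded-esolv-orb}. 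The delicacy is that $\phi_\kappa$ is a priori only a base-point-fixing isometry of $S$, so controlling it requires testing against unipotent and $\ad$--expanding directions in $\gs$ and using the simple transitivity of $S$.

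Finally, once $g\in S$, write $g=\exp\xi$ with $\xi\in\gs$, using that $\exp:\gs\to S$ is a diffeomorphism. Then $s^{-1}gs=\exp(\Ad(s^{-1})\xi)$, and the displacement bound becomes
\[
d\bigl(x_0,\exp(\Ad(s^{-1})\xi)(x_0)\bigr)\le C\qquad\text{for all }s\in S.
\]
The map $\gs\to M$, $\eta\mapsto\exp(\eta)(x_0)$, is a diffeomorphism and hence proper, so this bound forces $\Ad(S)\xi$ to be bounded in $\gs$. By the contrapositive of Lemma~\ref{unbounded-esolv-orb}, $\xi$ is central; therefore $g=\exp\xi$ lies in the center of $S$, which is the assertion.
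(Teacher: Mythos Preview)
Your structural setup (Lie group, compact isotropy, $G=SK$) and your final step (once $g\in S$, the bounded $\Ad(S)$--orbit of $\log g$ forces centrality via Lemma~\ref{unbounded-esolv-orb}) are both correct and match the paper. The gap is exactly where you flag it: you have not shown $\kappa=1$, and your proposed route through the base--point--fixing map $\phi_\kappa$ is genuinely hard because, as you yourself note, $\phi_\kappa$ is not known to be a group automorphism of $S$, so Lemmas~\ref{unbounded-unip-orb} and~\ref{unbounded-esolv-orb} do not apply to it. The flat analogy $b+(A-I)v$ is misleading precisely because there the isotropy acts linearly, i.e.\ by automorphisms.

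The paper sidesteps this by reversing your order: it extracts centrality of the $S$--part \emph{before} disposing of the $K$--part. Writing $g=sk$, the paper first observes that boundedness of $\delta_g$ together with compactness of $K$ makes the full conjugacy class $\Ad(G)g$ relatively compact in $G$; from this it reads off that $\Ad(S)s$ is bounded in $S$, and then Lemma~\ref{unbounded-esolv-orb} gives $s$ central, exactly as in your last paragraph. Only after $s$ is known to be central does the paper address $k$: it works with $\Ad(k)$ acting linearly on $\gs\cong T_{x_0}M$, and a one--line linear estimate (if $\Ad(k)\nu\ne\nu$ then $\Ad(k)(t\nu)$ eventually leaves $t\nu+C$ for any compact $C$) shows that a nontrivial $k$ would itself be unbounded on $(M,d)$; effectiveness of the isotropy representation then forces $k=1$.

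So the missing idea is not a sharper control of $\phi_\kappa$ but a change of order: handle the $S$--component first, which reduces the $K$--component to a linear question on the tangent space rather than the nonlinear one you are facing.
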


\begin{proof}
As noted above, $M$ carries a differentiable manifold structure for
which $s \mapsto s(x_0)$ is a diffeomorphism $S \cong M$.  As usual
$G = I(M,d)$ carries the compact--open topology.  The famous theorem of
van Danzig and van der Waerden \cite{vDvW1928} (or see 
\cite[Corollary 4]{M2010} for an exposition) says that $G$ is
locally compact and that its action on $M$ is proper.  In particular,
if $x_0 \in M$ then the isotropy subgroup 
$K = \{k \in G \mid k(x_0) = x_0\}$ is compact.  Further 
\cite[Corollary in \S6.3]{MZ1955} $G$ is a Lie group. Now 
$S$ and $K$ are closed subgroups, $G = SK$, and $M = G/K$.  
\smallskip

Express $g = sk$ with $s \in S$ and $k \in K$.  If $s = 1 \ne k$
then the differential of $k$ is unbounded on the tangent space to
$M$ at $x_0$\,, thus unbounded $\gs$, and thus unbounded on $(M,d)$.  
Thus $s \ne 1$ unless, of course, $g = 1$.
\smallskip

Suppose $s \ne 1$.  As $K$ is compact and the displacement function
$x \mapsto \delta_g(x)$ is bounded, $\Ad(G)g$ is bounded in $G$.
If $g' = s'k' \in G$ 
we compute $\Ad(g')g = s'k'skk'^{-1}s'^{-1}$.  That is bounded as 
$g'$ ranges over $G$, so $\Ad(S)s$ is bounded.  Let $N$ be the nilradical
of $S$.  Now $\Ad(N)s$ is a bounded unipotent $\Ad(N)$--orbit on $\gs$,
which is impossible unless $s$ centralizes $N$.  As in the proof of
Lemma \ref{unbounded-esolv-orb} it follows that $s$ is central in $S$.
\smallskip

Identify $\gs$ with the tangent
tangent space to $M$ at $x_0$\,.  Suppose
$\nu \in \gs$ with $\Ad(k)\nu \ne \nu$ and let $C$ be a compact
neighborhood of $0$ in $\gs$.  As $t \to \infty$,  $\Ad(k)(t\nu)$
must exit $t\nu + C$, so $\Ad(k)$ is unbounded on $\gs$.  
Thus $k$ is unbounded on $(M,d)$.  That is a contradiction, so
$\Ad(k)\nu = \nu$ for all $\nu \in \gs$, in other words $k$ is
trivial on the tangent space to $M$ at $x_0$\,.  As $M = I(M,d)/K$
it follows that $k = 1$.
\smallskip

Summarizing, the bounded isometry $g$ of $(M,d)$ is a central element
of $S$.
\end{proof}

\begin{corollary}\label{bounded-is-CW}
Let $(M,d)$ be a metric space on which an exponential solvable Lie group
$S$ acts effectively and transitively by isometries.  Then
every bounded isometry of $(M,d)$ is CW.
\end{corollary}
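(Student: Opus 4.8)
The plan is to reduce the statement to the already-established Theorem \ref{central}, after which only a short verification remains. By Theorem \ref{central}, any bounded isometry $g$ of $(M,d)$ lies in the center of $S$. Thus it suffices to prove the general fact that a central element of $S$ acts on $(M,d)$ as an isometry of constant displacement. The substantive work is therefore already done; what remains is to exploit the simple transitivity supplied by Lemma \ref{simply}.

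First I would fix a base point $x_0 \in M$ and, using Lemma \ref{simply}, identify $M$ with $S$ via the orbit map $s \mapsto s(x_0)$, so that $S$ acts on $M$ by left translation. For an arbitrary point $x = s(x_0)$ and a central $g \in S$, the key computation $g(x) = (gs)(x_0) = (sg)(x_0) = s\bigl(g(x_0)\bigr)$ uses only that $g$ commutes with every $s \in S$.

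Finally, since each $s \in S$ acts by an isometry, I would conclude
\[
\delta_g(x) = d\bigl(x, g(x)\bigr) = d\bigl(s(x_0), s(g(x_0))\bigr) = d\bigl(x_0, g(x_0)\bigr) = \delta_g(x_0),
\]
which is independent of $x$. Hence $\delta_g$ is constant and $g$ is CW. The only point requiring care is that the isometry $g$ corresponds to an element genuinely central in the group $S$ itself, rather than merely commuting with some distinguished subgroup; but this is precisely the conclusion of Theorem \ref{central}, so the potential obstacle has already been removed and the remainder of the argument is routine.
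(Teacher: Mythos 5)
Your proposal is correct and follows the paper's proof exactly: reduce to Theorem \ref{central} to place the bounded isometry in the center of $S$, then use that an isometry commuting with a transitive isometry group has constant displacement. The only difference is that you write out the short computation $d\bigl(s(x_0), s(g(x_0))\bigr) = d\bigl(x_0, g(x_0)\bigr)$ explicitly, whereas the paper delegates that step to a citation of \cite{W1960}.
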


\begin{proof} Each bounded isometry $g$ is centralized by $S$, which is
transitive on $(M,d)$, so $g$ is CW \cite{W1960}.
\end{proof}

\begin{corollary}\label{center-normal}
Let $(M,d)$ be a metric space on which an exponential solvable Lie group
$S$ acts effectively and transitively by isometries.  Then the center of
$S$ consists of all the CW isometries of $(M,d)$, and it 
is an abelian normal subgroup of $I(M,d)$.
\end{corollary}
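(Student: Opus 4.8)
The plan is to prove the two assertions in sequence, leaning entirely on the results already in hand. First I would identify the center $Z(S)$ with the set of all CW isometries of $(M,d)$. For the inclusion that every CW isometry lies in $Z(S)$: a constant displacement isometry is in particular bounded, so by Theorem \ref{central} it is a central element of $S$. For the reverse inclusion, every $z \in Z(S)$ is itself an isometry (since $S$ acts by isometries) that commutes with all of $S$; because $S$ acts transitively on $M$, the argument of \cite{W1960} already invoked in Corollary \ref{bounded-is-CW} shows directly that $z$ is CW. These two inclusions give the equality of $Z(S)$ with the set of CW isometries.

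Abelianness is then immediate, since $Z(S)$ is by definition the center of a group. The only remaining point is normality of $Z(S)$ in $G = I(M,d)$.

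For this I would use the elementary fact that the constant displacement property is preserved under conjugation by an arbitrary isometry. If $z$ has constant displacement $\delta$ and $g \in G$, then for every $x \in M$, writing $y = g^{-1}(x)$, one has $d(x, gzg^{-1}(x)) = d(g(y), gz(y)) = d(y, z(y)) = \delta$, since $g$ is an isometry. Hence $gzg^{-1}$ again has constant displacement $\delta$ and is CW. Because the set of CW isometries has just been identified with $Z(S)$, conjugation by any element of $G$ carries $Z(S)$ into itself, so $Z(S)$ is normal in $I(M,d)$.

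The individual computations are routine; the substantive step is the identification $Z(S) = \{\text{CW isometries}\}$, which must be established in both directions — once through Theorem \ref{central} to place CW isometries inside $Z(S)$, and once through \cite{W1960} to confirm that every central element is CW. Getting both inclusions is what lets the conjugation-invariance of the CW property yield normality of the group $Z(S)$ itself, rather than merely of the a priori larger set of bounded isometries; this is the point I would be most careful to state precisely.
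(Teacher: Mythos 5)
Your proof is correct and follows essentially the same route as the paper: Theorem \ref{central} places CW (hence bounded) isometries in the center of $S$, and the transitivity argument of \cite{W1960} gives the converse. Your explicit conjugation computation for normality is a detail the paper leaves implicit, and it is stated correctly.
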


\begin{proof}  If $g \in I(M,d)$ is central in $S$ then it commutes with
every element of the transitive group$S$ of isometries, so \cite{W1960}
it is a CW isometry.  If $g \in I(M,d)$ is CW then Theorem \ref{central}
shows that it is a central element of $S$.
\end{proof}

A {\em Riemannian nilmanifold} is a connected Riemannian manifold $(M,ds^2)$
on which a nilpotent group $N$ of isometries acts transitively.  Then
\cite[Theorem 4.2]{W1963} $N$ is the nilpotent radical of the isometry 
group $I(M,ds^2)$, and $I(M,ds^2)$ is the semidirect product 
$N\rtimes K$ where $K$ is the isotropy subgroup at a point of $M$.
We abbreviate this situation by writing $G = I(M,ds^2) = N\rtimes K$,
so $M = G/K = (N\rtimes K)/K$.
\smallskip

In the case of Riemannian nilmanifolds Lemma \ref{simply} is obvious, 
Lemma \ref{unbounded-unip-orb}
is needed as stated, and the proof of Lemma \ref{unbounded-esolv-orb} is 
reduced to its first two sentences.  We can skip the first paragraph of the
proof of Theorem \ref{central}.  Corollaries \ref{bounded-is-CW} is
unchanged, and Corollary \ref{center-normal} is obvious, with $S = N$
nilpotent.
\smallskip

The alternative proof of a result of J. Tits, described in the Introduction, 
follows from Theorem \ref{central} and the Iwasawa decomposition 
$G = NAK$ of a real reductive Lie group $G$.  There $M$ is a Riemannian 
symmetric space of noncompact type, $AN = NA$ is exponential solvable, and 
$AN$ acts transitively by isometries on $M$.

\section{Homogeneous Quotients}\label{sec3}
\setcounter{equation}{0}
We apply Theorem \ref{central} to the structure of covering spaces 
$\psi: (M,d) \to \Gamma \backslash (M,d)$ where $(M,d)$ is a metric space 
on which an exponential solvable Lie group $S$ acts effectively and 
transitively by isometries.  Recall here \cite{W1960} that if
$\Gamma \backslash (M,d)$ is homogeneous, then $\Gamma$ consists of 
CW isometries.  We then indicate the simplification for Riemannian nilmanifolds.
The following is immediate from Theorem \ref{central}.

\begin{theorem}\label{coverings}
Let $(M,d)$ be a  metric space on which an exponential solvable Lie group 
$S$ acts effectively and transitively by isometries.  Let $x_0 \in M$,
let $G = I(M,d)$,  and
let $K$ denote the isotropy subgroup of $G$ at $x_0$\,.  Consider a 
locally isometric covering space 
$\psi: (M,ds^2) \to \Gamma \backslash (M,ds^2)$.
Then the following conditions are equivalent.
\smallskip

{\rm 1.} $\Gamma$ consists of bounded isometries of $(M,d)$.
\smallskip

{\rm 2.} $\Gamma$ consists of CW isometries of $(M,d)$.
\smallskip

{\rm 3.} The group $\Gamma$ is a discrete central subgroup $S$.
\smallskip

{\rm 4.} $\Gamma \backslash (M,ds^2)$ is a homogeneous metric space.
\smallskip

{\rm 5.} $\Gamma \backslash (M,ds^2)$ is a metric space on which a
Lie group $S/\Gamma$ acts transitively, where $S$ is exponential 
solvable and $\Gamma$ is a discrete central subgroup of $S$.
\end{theorem}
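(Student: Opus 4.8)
The plan is to prove the five conditions equivalent by a single cyclic chain of implications, (1) $\Rightarrow$ (2) $\Rightarrow$ (3) $\Rightarrow$ (5) $\Rightarrow$ (4) $\Rightarrow$ (1), with each step resting on Theorem \ref{central}, one of its corollaries, or the result of \cite{W1960} recalled at the start of this section. The first link (1) $\Rightarrow$ (2) is precisely Corollary \ref{bounded-is-CW}: every bounded isometry of $(M,d)$ is CW, so if $\Gamma$ consists of bounded isometries it consists of CW isometries.

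For (2) $\Rightarrow$ (3) I would invoke Theorem \ref{central} (equivalently Corollary \ref{center-normal}): each CW element $\gamma \in \Gamma$ is central in $S$, so $\Gamma \subseteq Z(S) \subseteq S$. Since $\Gamma$ is the deck group of a covering it acts properly discontinuously, hence is discrete; thus $\Gamma$ is a discrete central subgroup of $S$. The step (3) $\Rightarrow$ (5) is the one carrying the real content. Assuming $\Gamma$ discrete and central in $S$, I would first observe that $\Gamma$ acts freely: by Lemma \ref{simply} the action of $S$ on $M$ is simply transitive, so a nontrivial element of $S$ has no fixed point. Freeness together with discreteness makes $\Gamma \backslash (M,d)$ a genuine metric space and $\psi$ a metric covering. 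Because every $\gamma$ commutes with every $s \in S$, the $S$-action on $M$ carries $\Gamma$-orbits to $\Gamma$-orbits and so descends to a transitive action on $\Gamma \backslash (M,d)$ by isometries of the quotient metric; this action factors through the Lie group $S/\Gamma$, which is exactly condition (5).

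The two remaining links close the cycle with little effort. For (5) $\Rightarrow$ (4) I would note that a metric space carrying a transitive group of isometries is by definition homogeneous. For (4) $\Rightarrow$ (1) I would use the result recalled from \cite{W1960}: if $\Gamma \backslash (M,d)$ is homogeneous then every $\gamma \in \Gamma$ is a CW isometry of $(M,d)$, and a CW isometry has constant, hence bounded, displacement. This returns us to condition (1) and completes the equivalence.

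The main obstacle is the descent step (3) $\Rightarrow$ (5): one must verify that centrality of $\Gamma$ in $S$ is exactly the condition making the $S$-action pass to the quotient, and that simple transitivity (Lemma \ref{simply}) forces $\Gamma$ to act freely, so that $\Gamma \backslash (M,d)$ is Hausdorff and the quotient metric is well defined. Every other implication is a definitional observation or a direct appeal to Theorem \ref{central}, Corollary \ref{bounded-is-CW}, or \cite{W1960}, which is why the statement can fairly be called immediate from Theorem \ref{central}.
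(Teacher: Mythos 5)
Your proof is correct and follows exactly the route the paper intends: the paper gives no written-out argument, declaring the theorem ``immediate from Theorem \ref{central}'' together with the fact recalled from \cite{W1960} that homogeneity of $\Gamma\backslash(M,d)$ forces $\Gamma$ to consist of CW isometries, and your cyclic chain $(1)\Rightarrow(2)\Rightarrow(3)\Rightarrow(5)\Rightarrow(4)\Rightarrow(1)$ is a faithful expansion of precisely those ingredients. Your attention to the descent step $(3)\Rightarrow(5)$ --- freeness via Lemma \ref{simply} and the passage of the $S$-action to $S/\Gamma$ on the quotient --- supplies the only detail the paper leaves implicit.
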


\begin{remark}
{\rm  Theorem \ref{coverings} applies in particular to Riemannian coverings
$\psi: (M,ds^2) \to \Gamma \backslash (M,ds^2)$, to Finsler manifold
coverings $\psi: (M,F) \to \Gamma \backslash (M,F)$, and to nilmanifolds.
Thus it tells us how to construct all connected Riemannian
nilmanifolds.  Start with a connected simply connected Lie group $N$, say
with center $Z$, and a discrete central subgroup $\Gamma \subset Z$.  Fix
a positive definite bilinear form $b$ on the Lie algebra $\gn$, and let 
$\overline{K}$ denote the group of all automorphisms of $N$ that normalize
$\Gamma$ and preserve  $b$.  Then $b$ translates around to define an
$((N/\Gamma)\rtimes \overline{K})$--invariant Riemannian metric
$dt^2$ on 
$\Gamma \backslash M=((N/\Gamma)\rtimes \overline{K})/\overline{K}$,
and $(\Gamma \backslash M,dt^2)$ is a connected Riemannian nilmanifold.  
Theorem \ref{coverings} says that this construction is exhaustive.}
\hfill $\diamondsuit$
\end{remark}

Another consequence is that coverings of our class of homogeneous metric 
space quotients, in particular of Riemannian nilmanifolds, always
are normal coverings.

\begin{corollary}\label{normal}
Let $\varphi: (M_1,d_1) \to (M_2,d_2)$ be a locally isometric 
covering space in which $(M_2,d_2)$ is a metric space on which a 
Lie group $S/\Delta_2$\,, $S$ exponential solvable and $\Delta_2$ discrete and
central in $S$, acts effectively and transitively 
by isometries.  Then $(M_1,d_1)$ is a metric space on which another 
quotient group $S/\Delta_1$ acts effectively and transitively 
by isometries.  Further, $\Delta_1 \subset \Delta_2$\,, and the covering
is normal with deck transformation group $\Delta_2/\Delta_1$\,.
\end{corollary}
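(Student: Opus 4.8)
The plan is to reduce the statement to the Galois correspondence for covering spaces, after identifying the common universal cover of $M_1$ and $M_2$ with the group $S$ itself. First I would make the universal covering of $(M_2,d_2)$ explicit. The group $S$ acts on $M_2$ through $S\to S/\Delta_2$, transitively and by isometries, with kernel $\Delta_2$. Since $S$ is exponential solvable, $\exp\colon\gs\to S$ is a diffeomorphism, so $S$ is connected and simply connected, and the action lifts to an isometric action on the universal cover $p\colon M_0\to M_2$. That lifted action is transitive (an orbit is open and closed in the connected $M_0$), its isotropy is compact by the theorem of van Dantzig and van der Waerden, and $S$ has no nontrivial compact subgroup; so, exactly as in Lemma \ref{simply}, it is simply transitive. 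Hence $M_0\cong S$ is simply connected, $p$ is the universal covering, and its deck group is a discrete central subgroup $\Delta_2\subseteq Z(S)$ acting by central translations, which are the CW isometries of Corollary \ref{center-normal}.

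Next I would insert the given covering $\varphi\colon(M_1,d_1)\to(M_2,d_2)$ into this picture, taking $M_1$ connected as is implicit. Because $M_0$ is simply connected, the lifting criterion gives a map $q\colon M_0\to M_1$ with $\varphi\circ q=p$, and $q$ is the universal covering of $M_1$. Let $\Delta_1$ be its deck group. Any $\delta$ with $q\delta=q$ satisfies $p\delta=\varphi q\delta=\varphi q=p$, so $\delta$ is a deck transformation of $p$; thus $\Delta_1\subseteq\Delta_2$. In particular $\Delta_1$ is a discrete central subgroup of $S$ and $M_1=\Delta_1\backslash M_0$.

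With $\Delta_1\subseteq\Delta_2\subseteq Z(S)$ in hand the remaining assertions are formal. As $\Delta_1$ is central it is normal in $S$, so the left action of $S$ on $M_0=S$ descends to an action of $S/\Delta_1$ on $M_1=\Delta_1\backslash M_0$; this action is transitive because $S$ is transitive on $M_0$, and its kernel is exactly $\Delta_1$, which yields the effective transitive action asserted in the first conclusion. Since $\Delta_2$ is abelian, $\Delta_1$ is automatically normal in $\Delta_2$, so the intermediate covering $\varphi\colon\Delta_1\backslash M_0\to\Delta_2\backslash M_0$ is normal, with deck transformation group $\Delta_2/\Delta_1$ acting by the residual central translations.

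The main obstacle is the first step: identifying the universal cover of $M_2$ with the simply connected group $S$ in its simply transitive action. This is where the hypotheses genuinely enter — lifting the transitive action is legitimate only because $S$ is connected and simply connected, and collapsing the lifted isotropy uses the absence of nontrivial compact subgroups in an exponential solvable group. Once that identification is made, the argument is purely the covering correspondence, made transparent by the fact that all the relevant groups lie in the center $Z(S)$ and hence are normal. A secondary point to verify is that covering-space theory applies in the metric category at all; this holds because $M_0\cong S$ is a manifold on which $\Delta_2$ acts freely (central translations of a group are fixed-point free) and properly discontinuously, so $M_2$, and with it $M_1$, are manifolds.
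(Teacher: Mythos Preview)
Your proposal is correct and follows essentially the same route as the paper: pass to the universal cover, identify it with $S$ acting simply transitively, and then use the Galois correspondence together with the abelianness of $Z(S)$ to conclude that $\Delta_1$ is normal in $\Delta_2$. The paper compresses your first two paragraphs into a single sentence invoking Theorem~\ref{coverings}, but the content is the same.
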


\begin{proof} As described above, the universal covering
$\psi_2: (M,d) \to (M_2,d_2)$ is given by dividing out with a
discrete subgroup $\Gamma_2$ of the center of $S$.  As
$\varphi: (M_1,d_1) \to (M_2,d_2)$ is a locally isometric covering,
the universal covering $\psi_1: (M,d) \to (M_1,d_1)$ is given by 
dividing out with a subgroup $\Gamma_1$ of $\Gamma_2$\,.  Since the 
center of $S$ is abelian, $\Gamma_1$ is normal in $\Gamma_2$, so 
$\varphi$ is the normal locally isometric covering given by dividing 
out with $\Gamma_2/\Gamma_1$.
\end{proof}

In the case where $(M,d)$ is a Riemannian manifold $(M,ds^2)$ or a
Finsler manifold $(M,F)$,
every $\xi \in \gg$ defines a Killing vector field $\xi^M$ on $(M,d)$.
If $\xi^M$ has bounded length on $(M,d)$ then $\exp(t\xi)$ is a bounded
isometry for all real $t$.  Now Theorem \ref{central} implies

\begin{corollary}\label{bded-killing}
Suppose that the metric space $(M,d)$ is Riemannian $($or Finsler$)$.
Let $M = G/K = SK/K$ where $G = I(M,d)$, $S$ is an exponential solvable 
Lie group acting transitively on $(M,d)$, and $K$ is an isotropy subgroup.  
Let $\xi \in \gg$ such that $\xi^M$ is a Killing vector field of bounded
length on $(M,d)$.  Then $\xi$ belongs to the center of $\gs$ and 
$\xi^M$ has constant length on $(M,d)$.
\end{corollary}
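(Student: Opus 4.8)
The plan is to deduce everything from Theorem \ref{central} applied to the one-parameter group generated by $\xi$. As recorded in the paragraph preceding the statement, bounded length of the Killing field $\xi^M$ forces each isometry $\exp(t\xi)$ to have bounded displacement: the flow curve $s\mapsto\exp(s\xi)x$, $s\in[0,t]$, has velocity $\xi^M$, so $d\bigl(x,\exp(t\xi)x\bigr)\le\int_0^t F\bigl(\xi^M(\exp(s\xi)x)\bigr)\,ds\le |t|\sup_M F(\xi^M)$, which is finite for each fixed $t$. Thus $\{\exp(t\xi)\}_{t\in\R}$ is a one-parameter group of bounded isometries of $(M,d)$, and I may feed each of its members into Theorem \ref{central}.

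First I would extract the algebraic conclusion. By Theorem \ref{central} each bounded isometry $\exp(t\xi)$ is a central element of $S$; in particular it lies in $S$, so the one-parameter subgroup it generates lies in the closed subgroup $S$ and hence $\xi\in\gs$. Centrality gives $s\exp(t\xi)s^{-1}=\exp(t\xi)$ for every $s\in S$ and every $t$. Rewriting the left side as $\exp\bigl(t\,\Ad(s)\xi\bigr)$ and using that $\exp:\gs\to S$ is a diffeomorphism, I obtain $\Ad(s)\xi=\xi$ for all $s\in S$; differentiating along $s=\exp(u\eta)$ at $u=0$ then yields $[\eta,\xi]=0$ for every $\eta\in\gs$, which is exactly the assertion that $\xi$ lies in the center of $\gs$.

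For the metric conclusion I would use that a bounded isometry is not merely central but CW. By Corollary \ref{bounded-is-CW} each $\exp(t\xi)$ is of constant displacement, so for every fixed $t$ the function $x\mapsto d\bigl(x,\exp(t\xi)x\bigr)$ is constant on $M$. It then remains to pass from constant displacement of the flow to constant length of its generator: dividing this constant by $|t|$ and letting $t\to 0^+$, the first-order asymptotic $d\bigl(x,\exp(t\xi)x\bigr)=t\,F\bigl(\xi^M(x)\bigr)+o(t)$ shows that $x\mapsto F(\xi^M(x))$ is constant, i.e. $\xi^M$ has constant length. The only place to be slightly careful is this last limit in the Finsler case, where $F$ need not be reversible, so one takes the one-sided limit $t\to 0^+$ and reads off the forward infinitesimal displacement rate. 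I expect no genuine obstacle once Theorem \ref{central} is in hand: this corollary is essentially its infinitesimal shadow.
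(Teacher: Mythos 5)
Your proposal is correct and follows essentially the same route as the paper, which derives the corollary directly from Theorem \ref{central} via the observation (stated just before the corollary) that bounded length of $\xi^M$ makes each $\exp(t\xi)$ a bounded, hence central and CW, isometry. You have simply filled in the details (the integral estimate, the differentiation of $\Ad(s)\xi=\xi$, and the $t\to 0^+$ limit) that the paper leaves implicit.
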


In the Riemannian nilmanifold setting, the formulation of Theorem
\ref{coverings} is a bit less complicated.  It becomes

\begin{corollary}\label{nil-coverings}
Let $(M,ds^2)$ be a simply connected Riemannian nilmanifold.  Consider
a Riemannian covering $\psi: (M,ds^2) \to \Gamma \backslash (M,ds^2)$.
Then the following conditions are equivalent.
\smallskip

{\rm 1.} $\Gamma$ consists of bounded isometries of $(M,ds^2)$.
\smallskip

{\rm 2.} $\Gamma$ consists of CW isometries of $(M,ds^2)$.
\smallskip

{\rm 3.} $G = I(M,ds^2) = N\rtimes K$ semidirect product with $N$ nilpotent,
and the group $\Gamma$ is central in $N$.
\smallskip

{\rm 4.} $\Gamma \backslash (M,ds^2)$ is a homogeneous Riemannian manifold.
\smallskip

{\rm 5.} $\Gamma \backslash (M,ds^2)$ is a Riemannian nilmanifold.
\smallskip

\noindent
Further, every connected Riemannian nilmanifold is isometric to a manifold
$\Gamma \backslash (M,ds^2)$ as just described.
\end{corollary}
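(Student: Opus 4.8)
The plan is to specialize Theorem \ref{coverings} to the case $S = N$, taking advantage of the rigid structure of Riemannian nilmanifolds. First I would fix the setup: since $(M,ds^2)$ is a simply connected Riemannian nilmanifold, by \cite[Theorem 4.2]{W1963} the nilradical $N$ of $G = I(M,ds^2)$ acts transitively and $G = N \rtimes K$ is a semidirect product. Because the product is semidirect, $N \cap K = \{1\}$, so $N$ acts simply transitively; hence $M \cong N$ as manifolds, $N$ is a simply connected nilpotent Lie group, and therefore $\exp : \mathfrak{n} \to N$ is a diffeomorphism, i.e. $N$ is exponential solvable. This places us exactly in the hypotheses of Theorem \ref{coverings} with $S = N$.

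With that in hand, conditions 1, 2, and 4 of the Corollary are literally conditions 1, 2, and 4 of Theorem \ref{coverings} (the ambient category being Riemannian, ``homogeneous metric space'' reads as ``homogeneous Riemannian manifold''), so the bulk of the equivalences transfers immediately. It remains to reconcile conditions 3 and 5 with their generic counterparts. For condition 3, I would note that $\Gamma$, being the deck transformation group of a covering, is automatically discrete, so the generic ``discrete central subgroup of $S$'' is exactly ``$\Gamma$ central in $N$''; the decomposition $G = N \rtimes K$ is precisely the structural fact already provided by \cite{W1963}. Thus condition 3 of the Corollary is just the nilpotent rephrasing of condition 3 of Theorem \ref{coverings}.

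For condition 5, I would argue both directions through conditions 3 and 4. If $\Gamma$ is central in $N$, then $N/\Gamma$ is the quotient of a nilpotent group by a central (hence normal) subgroup, so it is again nilpotent and acts transitively by isometries on $\Gamma \backslash (M,ds^2)$; thus the quotient is a Riemannian nilmanifold, giving condition 5. Conversely, a Riemannian nilmanifold is homogeneous, so condition 5 implies condition 4, which already lies in the established cycle. Hence condition 5 folds into the equivalence class as well.

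Finally, for the exhaustiveness claim, I would take an arbitrary connected Riemannian nilmanifold $(M',dt^2)$ and pass to its universal Riemannian covering $\psi : (M,ds^2) \to (M',dt^2)$. As set up in Section \ref{sec2}, the nilpotent action lifts so that $(M,ds^2)$ is a simply connected Riemannian nilmanifold and $\Gamma = \pi_1(M')$ acts as a discrete group of deck isometries with $\Gamma \backslash (M,ds^2) = (M',dt^2)$. Since $(M',dt^2)$ is homogeneous, condition 4 holds, so by the equivalence just proved $\Gamma$ is central in $N$ and $(M',dt^2)$ is of the stated form. I expect the only genuine subtlety to be the bookkeeping around condition 5 together with the verification that the universal cover of a Riemannian nilmanifold is again a simply connected Riemannian nilmanifold; everything else is a direct transcription of Theorem \ref{coverings}, which is why the Corollary is ``a bit less complicated'' than the general statement.
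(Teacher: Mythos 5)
Your proposal is correct and follows essentially the same route as the paper, which presents this corollary as a direct specialization of Theorem \ref{coverings} to $S = N$ using the structure theorem $G = I(M,ds^2) = N \rtimes K$ from \cite[Theorem 4.2]{W1963}. The details you supply (simple transitivity of $N$, discreteness of $\Gamma$ being automatic, nilpotence of $N/\Gamma$ for condition 5, and lifting to the universal cover for exhaustiveness) are exactly the bookkeeping the paper leaves implicit.
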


\bigskip
\noindent Department of Mathematics \hfill\newline
\noindent University of California\hfill\newline
\noindent Berkeley, California 94720--3840, USA\hfill\newline
\smallskip
\noindent {\tt jawolf@math.berkeley.edu}

\enddocument
\end
\begin{thebibliography}{XX}

\bibitem{A1989} S. L. Altmann, Hamilton, Rodrigues, and the 
quaternion scandal, Mathematics Mag. {\bf 62} (1989), 291--308.

\bibitem{AW1976}  R. Azencott \& E. N. Wilson, 
Homogeneous manifolds with negative curvature. I. Trans. Amer. Math. 
{\bf 215} (1976), 323--362.

\bibitem{BN2008a} V. N. Berestovskii \& Y. G. Nikonorov,
 Killing vector fields of constant length on locally symmetric Riemannian
manifolds. Transformation Groups, {\bf 13} (2008), 25--45.

\bibitem{BN2008b} V. N. Berestovskii \& Y. G. Nikonorov,
On Clifford--Wolf homogeneous Riemannian manifolds.
Doklady Math., {\bf 78} (2008), 807--810.

\bibitem{BN2009} V. N. Berestovskii \& Y. G. Nikonorov,
Clifford--Wolf homogeneous Riemannian manifolds. J. Differential Geom. {\bf 82}
(2009), 467--500.

\bibitem{C1983} O. C\' ampoli, Clifford isometries of compact
homogeneous Riemannian manifolds, Revista Uni\' on Math. Argentina 
{\bf 31} (1983), 44--49.

\bibitem{C1986} O. C\' ampoli, Clifford isometries of the real
Stiefel manifolds, Proc. Amer. Math. Soc. {\bf 97} (1986), 307--310.

\bibitem{C1873} W. K. Clifford, Preliminary sketch of biquaternions.
Proc. London Math. Soc., {\bf 3} (1893), 381--395.

\bibitem{vDvW1928} D. van Dantzig \& B. L. van der Waerden, 
\"Uber metrisch homogene R\"aume, Abh.  Math. Seminar Hamburg
{\bf 6} (1928), 367--376.

\bibitem{DX2012} S. Deng \& M. Xu, Clifford--Wolf translations of Finsler
spaces.  Forum Math, 2012

\bibitem{DX2013a} S. Deng \& M. Xu,
Clifford--Wolf Homogeneous Randers spaces.
J. Lie Theory {\bf 23} (2013), 837--845.

\bibitem{DX2013b} S. Deng \& M. Xu,
Clifford--Wolf homogeneous left invariant
$(\alpha,\beta) $-metrics on compact semi-simple Lie groups.
arXiv:1301.1813

\bibitem{DX2013c} S. Deng \& M. Xu,
Clifford--Wolf homogeneous Finsler metrics
on spheres. Annali di Matematica Pura ed Applicata, 2013

\bibitem{DX2014a} S. Deng \& M. Xu,
Clifford--Wolf Homogeneous Randers spheres.
Israel J. Math., {\bf 199} (2014), 507--525

\bibitem{DX2014b} S. Deng \& M. Xu,
Clifford--Wolf translations of left
invariant Randers metrics on compact Lie groups.  Quarterly J. Math. 
{\bf 65} (2014), 133--148.

\bibitem{DW2012} S. Deng \& J. A. Wolf,
Locally symmetric homogeneous Finsler spaces.
International Mathematical Research Notes (IMRN), {\bf 2013} (2012),
4223--4242.

\bibitem{DMW1986}
I. Dotti Miatello, R. J. Miatello \& J. A. Wolf,
Bounded isometries and homogeneous Riemannian quotient manifolds,  
Geometriae Dedicata, {\bf 21} (1986), 21--28. 

\bibitem{D1983} M. J. Druetta, Clifford translations in manifolds without
focal points. Geometriae Dedicata, {\bf 14} (1983), 95--103.

\bibitem{E1907} F. Enriques, ``Prinzipien der Geometrie'', 
Enzyklop\" adie der Mathematischen
Wissenschaft, Part III, Bd. 1, 1907, S. 112ff.

\bibitem{F1963} H. Freudenthal, Clifford-Wolf-Isometrien symmetrischer
R\" aume.  Math. Ann. {\bf 150} (1963), 136--149.

\bibitem{H1844a} W. R. Hamilton, On quaternions; or a new system of imaginaries
in algebra, Phil. Mag. {\bf 25(3)}, (1844), 489--495.

\bibitem{H1844b}  W. R. Hamilton, On a new species of imaginary quantities
connected with the theory of quaternions, Proc. Roy. Irish Acad. {\bf 2}
(1844), 424--434.

\bibitem{H1961}  A. Hattori, On $3$--dimensional elliptic space forms, 
Sugaku {\bf 12} (1961), 164--167.  In Japanese.

\bibitem{H1974}  E. Heintze, 
On homogeneous manifolds of negative curvature,
Math. Ann.  {\bf 211} (1974), 23--34.

\bibitem{H1926} H Hopf, Zur Clifford--Kleinschen Raumproblem, 
Math Annalen {\bf 95} (1926), 313--339.

\bibitem{K1890} F. Klein, Zur nicht--euklidischen Geometrie, Math. Annalen
{\bf 37} (1890), 544--572.

\bibitem{K1891} W. Killing, \" Uber die Clifford--Kleinschen Raumformen, Math.
Annalen {\bf 39} (1891), 257--278.

\bibitem{K1893} W. Killing,, Einfuhrung in die Grundlagen der Geometrie,
Paderborn, M\" unster, 1893, especially Part 4, Die Clifford--Kleinschen 
Raumformen.

\bibitem{M2010} A. Manoussos, On the action of the group of isometries on a
locally compact metric space, M\"unster J. Math. {\bf 3} (2010), 233--236.

\bibitem{MZ1955} D. Montgomery \& L. Zippin, ``Topological Transformation
Groups'', Interscience, 1955.

\bibitem{O1969} V. Ozols, Critical points of the displacement function
of an isometry. J. Differential Geometry {\bf 3} (1969), 411--432.

\bibitem{O1973} V. Ozols,
Critical sets of isometries. Differential geometry, Proc. Sympos. Pure Math.,
{\bf 27} (1973), 375--378.

\bibitem{O1974b} V. Ozols,
Clifford translations of symmetric spaces.
Proc. Amer. Math. Soc. {\bf 44} (1974), 169--175.

\bibitem{R1840} O. Rodrigues,
Des lois g\' eometriques qui r\' egissent les d\' eplacements d'un
syst\` eme dans l'espace, et la variation des coordonne\' es provenant de ses 
d\' eplacements consid\' er\' es ind\' ependement des causes qui peuvent 
les produire, J. Math. Pures et Appl. {\bf 5} (1840), 380--440.

\bibitem{TS1930} W. Threlfall \& H Seifert, Topologische Untersuchung der 
Diskontinuit\" atsbereiche endlicher Bewegungsgruppen des dreidimensionalen 
sph\" arischen Raumes, Math. Annalen {\bf 104} (1930), 1--70. 

\bibitem{TS1932} W. Threlfall \& H Seifert, Topologische Untersuchung der 
Diskontinuit\" atsbereiche endlicher Bewegungsgruppen des dreidimensionalen 
sph\" arischen Raumes (Schlu\ss ), Math. Annalen {\bf 107} (1932), 543--586.

\bibitem{T1964}  J. Tits, 
Automorphismes \`a d\'eplacement born\'e des groupes de Lie, 
Topology {\bf 3} (1964), 97--107.

\bibitem{V1947} G. Vincent, Les groupes lin\' eaires finis sans points fixes.
Commentarii Mathematici Helvetici, {\bf 20} (1947), 117--171.

\bibitem{W1960}  J. A. Wolf,
Sur la classification des vari\' et\' es riemanni\`ennes
homog\` enes \` a courbure constante.  C. R. Acad. Sci. Paris,
{\bf 250} (1960), 3443--3445.

\bibitem{W1961a} J. A. Wolf,
 Vincent's conjecture on Clifford translations of the sphere.
Commentarii Mathematici Helvetici, {\bf 36} (1961), 33--41.

\bibitem{W1961b}  J. A. Wolf,
Homogeneous manifolds of constant curvature.  Commentarii
Mathematici Helvetici, {\bf 36} (1961), 112--147.

\bibitem{W1962b}  J. A. Wolf,
Locally symmetric homogeneous spaces.  Commentarii
Mathematici Helvetici, {\bf 37} (1962), 65--101.

\bibitem{W1963}
J. A. Wolf, On locally symmetric spaces of non--negative curvature and certain
other locally homogeneous spaces,
Comm. Math. Helv. {\bf 37} (1963), 266--295.

\bibitem{W1964} J. A. Wolf,
Homogeneity and bounded isometries in manifolds of negative
curvature. Illinois Journal of Mathematics, {\bf 8} (1964), 14--18.

\bibitem{WPM2015} J. A. Wolf, F. Podest\` a \& M. Xu,
Toward a classification of Killing vector fields of constant length
        on pseudo--riemannian normal homogeneous spaces,
in preparation.

\bibitem{XW2014} M. Xu \& J. A. Wolf, 
Killing vector fields of constant length on Riemannian normal homogeneous 
spaces, to appear.  \{arXiv:1412.3177 (math.DG math.GR)\}.

\end{thebibliography}
